\newtheorem{theorem}{Theorem}
\newtheorem{proposition}[theorem]{Proposition}
\newtheorem{lemma}[theorem]{Lemma}
\newtheorem{corollary}[theorem]{Corollary}
\newtheorem{example}[theorem]{Example}
	\theoremstyle{definition}
\newcommand{\Refk}{{\rm Ref_{k}}}
\newcommand{\rd}{{\rm rd}}
\newcommand{\tr}{{\rm tr}}
\newcommand{\im}{{\rm im}}
\newcommand{\bC}{\mathbb{C}}
\newcommand{\bM}{\mathbb{M}}
\newcommand{\bN}{\mathbb{N}}
\newcommand{\pF}{\mathcal{F}}
\newcommand{\pS}{\mathcal{S}}
\newcommand{\pX}{\mathcal{X}}
\begin{document}

%************************************************************************

\title[$k$-reflexivity defect of the image of a generalized derivation]{$k$-reflexivity defect of the image of a generalized derivation}
\author[T. Rudolf]{Tina Rudolf}
\address{University of Ljubljana, IMFM, Jadranska ul. 19, 1000 Ljubljana, Slovenia}
\email{tina.rudolf@fmf.uni-lj.si}

\keywords{$k$-reflexivity; $k$-reflexivity defect; generalized derivations; elementary operator}
\subjclass[2010]{Primary 47L05; Secondary 15A21, 15A22}

\begin{abstract}
Let $\mathcal{X}$ be a finite-dimensional complex vector space and let $k$ be a positive integer. An explicit formula for the $k$-reflexivity defect of the image of a generalized derivation on $L(\pX)$, the space of all linear transformations on $\mathcal{X}$, is given. Using latter, we also study the $k$-reflexivity defect of the image of an elementary operator of the form $\Delta(T)=ATB-T$ ($T \in L(\mathcal{X})$).
\end{abstract}

\maketitle

%************************************************************************
%************************************************************************
\section{Introduction}
\setcounter{theorem}{0}
%************************************************************************
%************************************************************************

Let $\mathcal{X}$ be a finite-dimensional complex vector space and let $L(\mathcal{X})$ be the space of all linear transformations on $\mathcal{X}$. Let $k$ be a positive integer and denote by $\pF_k$ the set of all elements in $\bM_n$ of rank $k$ or less. The $k$-reflexive cover of a non-empty subset $\mathcal{S} \subseteq L(\mathcal{X})$ is defined by
\begin{equation*}
{\rm Ref_{k}}  \mathcal{S}=\{T \in L(\mathcal{X}): \forall \varepsilon >0, \, \forall x_1, \ldots,x_k \in \mathcal{X},\, \exists S \in \mathcal{S}: \, \|Tx_i-Sx_i\|<\varepsilon, \, i=1,\ldots,k\}.
\end{equation*}
It is easy to see that ${\rm Ref_{k}}  \mathcal{S}$ is a linear subspace of $L(\mathcal{X})$. A linear subspace $\mathcal{S}$ is said to be $k$-reflexive if ${\rm Ref_{k}}  \mathcal{S}=\mathcal{S}$. The $k$-reflexivity defect of a non-empty subset $\pS$ is defined by ${\rm rd} _k(\mathcal{S})=\dim ({\rm Ref_{k}}  \mathcal{S} / \mathcal{S})$. Since $\pX$ is finite dimensional ${\rm rd} _k(\mathcal{S})=\dim ({\rm Ref_{k}}  \mathcal{S})-\dim (\mathcal{S})$ holds. The annihilator of a non-empty subset $\pS \subseteq \bM_n$ is defined by $\pS_\perp=\{C \in \bM_n:\, \tr (CS)=0 \ \textup{for all} \ S \in \pS\}$, where $\tr(\cdot)$ denotes the trace functional. It was shown in \cite{KL, KL2} that
\begin{equation}	\label{k-ref}
\Refk \pS=\left(\pS_\perp \cap \pF_k \right)_\perp
\end{equation}
holds. The latter obviously implies that a $k$-reflexive space is also $j$-reflexive for all $j \geq k$. 

Let $A,\,B \in L(\mathcal{X})$ be invertible linear transformations and let $\mathcal{S}$ be a linear subspace of $L(\mathcal{X})$. Let us denote $A\mathcal{S} B=\{ASB:\,S \in \mathcal{S}\}$ and $\mathcal{S}^\intercal=\{S^\intercal:\, S \in \mathcal{S}\}$. It is well known that transformations of the type
\begin{equation}	\label{transf}
\mathcal{S} \mapsto A\mathcal{S} B = \{ASB:\,S \in \mathcal{S}\}	\quad \textrm{and} \quad \mathcal{S} \mapsto \mathcal{S}^\intercal =\{S^\intercal:\, S \in \mathcal{S}\}
\end{equation}
preserve the $k$-reflexivity defect. Hence, since $\mathcal{X}$ is a finite-dimensional complex vector space, one can assume that $\mathcal{X}=\mathbb{C}^n$ for some $n \in \mathbb{N}$ and $L(\mathcal{X})$ may be identified with $\mathbb{M}_n$, the algebra of all $n$-by-$n$ complex matrices. Throughout this paper we will be dealing with subspaces of $\mathbb{M}_n$ which have the decomposition of the form
$$\mathcal{S}=\left(\begin{array}{ccc} \mathcal{S}_{11}&\ldots&\mathcal{S}_{1N}\\\vdots&&\vdots\\\mathcal{S}_{M1}& \ldots&\mathcal{S}_{MN}\end{array}\right),$$
where, for each pair of indices $(i,\,j)$, $\mathcal{S}_{ij}$ is a subspace of $\mathbb{M}_{m_i,n_j}$, the space of all $m_i$-by-$n_j$ complex matrices, and $\sum_{i=1}^Mm_i=\sum_{j=1}^Nn_j=n$. It is not hard to see that for spaces of this type one has
\begin{equation} \label{ref}
{\rm Ref_{k}} (\mathcal{S})=\left(\begin{array}{ccc} {\rm Ref_{k}}  (\mathcal{S}_{11})&\ldots&{\rm Ref_{k}}  (\mathcal{S}_{1N})\\\vdots&&\vdots\\{\rm Ref_{k}}  (\mathcal{S}_{M1})&\ldots&{\rm Ref_{k}}  (\mathcal{S}_{MN})\end{array}\right) \qquad \textrm{and} \qquad {\rm rd}  _k\left(\mathcal{S}\right)=\sum_{i=1}^M \sum_{j=1}^N{\rm rd}  _k \left(\mathcal{S}_{ij}\right).
\end{equation}
In particular, $\mathcal{S}$ is $k$-reflexive if and only if $\mathcal{S}_{ij}$ is $k$-reflexive for every pair of indices $i\in \{1,\ldots,M\}$, $j \in \{1,\ldots,N\}$.

%************************************************************************
%************************************************************************
\section{Elementary operators}
\label{EO}
\setcounter{theorem}{0}
%************************************************************************
%************************************************************************
Let $(A_1,\ldots,A_k)$ and $(B_1,\ldots,B_k)$ be arbitrary pairs of $n$-by-$n$ complex matrices. The elementary operator on $\mathbb{M}_n$ with coefficients $(A_1,\ldots,A_k)$ and $(B_1,\ldots,B_k)$ is defined by
\begin{equation*}	\label{delta}
\Delta (T)=A_1TB_1+\ldots+A_kTB_k, \qquad T \in \mathbb{M}_n.
\end{equation*}
If all $A_i$ are pairwise linearly independent and if the same holds for all $B_i$ ($1 \leq i \leq k$), then $\Delta$ is called elementary operator of length $k$. The simplest example of such operator is of course two-sided multiplication. Namely, let $A,\,B \in \bM_n$ and let $\Delta$ be an elementary operator defined by $\Delta \left(T\right)=ATB$ for $T \in \bM_n$. It is easy to see that the kernel and the image of $\Delta$ are reflexive spaces. In fact, if $\Delta$ is an elementary operator of length $k$ on $\bM_n$, then by \cite[Proposition 1.1]{B1} the space $\ker \Delta$ is $j$-reflexive for every $j \geq k$. It is reasonable to ask whether the same holds for $\im \Delta$ and we show that this is not generally the case.

\begin{lemma} \label{anih}
Let $\Delta$ be an elementary operator on $\bM_n$ with coefficients $\left(A_1,\, \ldots,\, A_k\right)$ and \linebreak $\left(B_1,\, \ldots,\,B_k\right)$, defined by $\Delta \left(T\right)=A_1TB_1+A_2TB_2+\ldots+A_kTB_k$. Then there exists an elementary operator $\tilde{\Delta}$ such that $\left(\im \Delta\right)_\perp=\ker \tilde{\Delta}$.
\end{lemma}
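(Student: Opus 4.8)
The plan is to compute the annihilator $(\im \Delta)_\perp$ explicitly by unwinding the definition of the trace pairing. A matrix $C \in \bM_n$ lies in $(\im \Delta)_\perp$ precisely when $\tr\bigl(C\,\Delta(T)\bigr)=0$ for every $T \in \bM_n$, since every element of $\im \Delta$ is of the form $\Delta(T)$. So the first step is to rewrite $\tr\bigl(C\,\Delta(T)\bigr)$ in a form that isolates the dependence on $T$.

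Substituting the definition of $\Delta$ and using the linearity and cyclic invariance of the trace, I would compute
\begin{equation*}
\tr\bigl(C\,\Delta(T)\bigr)=\sum_{i=1}^k \tr\bigl(C A_i T B_i\bigr)=\sum_{i=1}^k \tr\bigl(B_i C A_i\, T\bigr)=\tr\Bigl(\bigl(\textstyle\sum_{i=1}^k B_i C A_i\bigr)T\Bigr).
\end{equation*}
The key observation is now that the trace form $(M,T)\mapsto \tr(MT)$ on $\bM_n$ is nondegenerate, so $\tr(MT)=0$ for all $T\in\bM_n$ forces $M=0$. Applying this with $M=\sum_{i=1}^k B_i C A_i$ yields the characterization
\begin{equation*}
(\im \Delta)_\perp=\Bigl\{C\in\bM_n:\ \sum_{i=1}^k B_i C A_i=0\Bigr\}.
\end{equation*}
Defining $\tilde\Delta(C)=B_1 C A_1+\ldots+B_k C A_k$, which is an elementary operator on $\bM_n$ with coefficients $(B_1,\ldots,B_k)$ and $(A_1,\ldots,A_k)$, the right-hand side is exactly $\ker\tilde\Delta$, which gives the claim.

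I do not expect a serious obstacle here: the argument is essentially the cyclic reshuffling of the trace together with the nondegeneracy of the trace pairing. The only point that needs a word of care is the passage from ``$\tr(MT)=0$ for all $T$'' to ``$M=0$'', which one justifies either by testing against the standard matrix units $T=E_{ji}$ (so that $\tr(M E_{ji})$ recovers the $(i,j)$ entry of $M$) or by invoking nondegeneracy of the trace form directly. It is also worth noting that the resulting $\tilde\Delta$ is obtained simply by interchanging the roles of the left and right coefficient tuples; since the statement only asks for an elementary operator, no claim about its length or about linear independence of the coefficients is required.
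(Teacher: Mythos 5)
Your argument is correct and coincides with the paper's own proof: both define $\tilde{\Delta}(C)=B_1CA_1+\ldots+B_kCA_k$, use the cyclic invariance of the trace to rewrite $\tr(\Delta(T)C)$ as $\tr\bigl(T\,\tilde{\Delta}(C)\bigr)$, and conclude via nondegeneracy of the trace pairing (which the paper phrases as $(\bM_n)_\perp=\{0\}$). Your remark that no linear-independence claim is needed for $\tilde{\Delta}$ is also consistent with the paper's usage.
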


\begin{proof}
Define $\tilde{\Delta}\left(T\right)=B_1TA_1+B_2TA_2+\ldots+B_kTA_k$ for $T \in \bM_n$. If $T$ is an arbitrary matrix, then $\tr (\Delta(T)C)=\tr(T(B_1CA_1+\ldots+B_kCA_k))$ and therefore $C \in \left(\im \Delta\right)_\perp$ if and only if $\tilde{\Delta}(C) \in (\bM_n)_\perp=\{0\}$, that is, $C \in \ker \tilde{\Delta}$.
\end{proof}

Next, we introduce some notation. For $k \in \mathbb{N}$ and $\alpha \in \mathbb{C}$, let $J_k(\alpha)$ denote the Jordan block of size $k$, i.e.,
$$J_k(\alpha)=\left(\begin{smallmatrix} \alpha&1&&&\\&&\ddots&\ddots&\\&&&\alpha&1\\&&&&\alpha\end{smallmatrix}\right)
 \in \mathbb{M}_k.$$
In the following example we show that for any $n \geq 3$ there exists an inner derivation $\delta$ on $\bM_n$ such that $\im \delta$ is not $(n-1)$-reflexive. Consequently, the image of such elementary operator of length $2$ is not $2$-reflexive.

\begin{example}
\rm Define $\delta\left(T\right)=J_n(0)T-TJ_n(0)$ for $T \in \bM_n$. By \eqref{k-ref}, every subspace of $\bM_n$ is $n$-reflexive, hence $\rd_n (\im \delta)=0$. It follows by Lemma \ref{anih} that $(\im \delta)_\perp$ is simply $\{J_n(0)\}'$, the commutant of the Jordan block $J_n(0)$. One can easily verify that $\{J_n(0)\}'$ is the algebra of all $n \times n$ upper triangular Toeplitz matrices which we will denote by $\mathfrak{T}_n$. Namely,
$$\left(\im \delta\right)_\perp=\left\{ \left( \begin{array}{cccccc} a_1&a_2&\ldots&\ldots&a_n\\ 0&a_1&a_2& &\vdots \\ \vdots&\ddots&\ddots&\ddots&\vdots \\ \vdots&&\ddots&\ddots&a_2 \\ 0&\ldots&\ldots&0&a_1\end{array}\right) :\, a_1,\,a_2, \ldots a_n \in \bC \right\}.$$
By \eqref{k-ref}, $\im \delta$ is not $(n-1)$-reflexive space, since $\left(\im \delta\right)_\perp \cap \pF_{n-1} \subsetneq \left(\im \delta\right)_\perp$. Note that \eqref{k-ref} also implies that $\rd_k \left(\im \delta\right)=n-k$ for $1 \leq k \leq n-1$. Indeed, $\dim (\im \delta)=n^2-\dim ((\im \delta)_\perp)=n^2-n$ and by \eqref{k-ref} we have $\dim (\Refk (\im \delta))=n^2-\dim((\im \delta)_\perp \cap \pF_k)=n^2-k$ for $1 \leq k \leq n-1$.
\end{example}

%************************************************************************
%************************************************************************
\section{Generalized derivations} \label{GD}
\setcounter{theorem}{0}
%************************************************************************
%************************************************************************
Let $\Delta$ be an elementary operator of length $2$ on $\bM_n$, i.e., a linear transformation of the form $\Delta(T)=A_1TB_1+A_2TB_2$ ($T \in \mathbb{M}_n$), where $A_1,\,A_2$ and $B_1,\,B_2$ are two pairs of linearly independent matrices. By \cite[Proposition 1.1]{B1}, one has $\rd_k(\ker \Delta)=0$ for all $k \geq 2$. In \cite{R} reflexivity of such elementary operator was studied and an explicit formula for the reflexivity defect of its kernel was given. This motivates the main subject of this paper, that is the $k$-reflexivity defect of the image of some special examples of elementary operators of length $2$.

Let $A,\,B \in \mathbb{M}_n$ be arbitrary matrices. Define the generalized derivation on $\mathbb{M}_n$ with coefficients $A$ and $B$ by $\Delta \,\left(T\right)=AT-TB$, $T \in \mathbb{M}_n$. Obviously, $\Delta$ is an example of an elementary operator of length $2$. Let $J_{p_1}(\lambda_1)\oplus \ldots \oplus J_{p_N}(\lambda_N)$ be the Jordan canonical form of $A$, where $\sum_{i=1}^Np_i=n$ and $\lambda_1, \ldots,\lambda_N$ are not necessarily distinct eigenvalues of $A$. Similarly, let $J_{r_1}(\mu_1)\oplus \ldots \oplus J_{r_M}(\mu_M)$ be the Jordan canonical form of $B$, where $\sum_{i=1}^Mr_i=n$ and $\mu_1, \ldots,\mu_M$ are not necessarily distinct eigenvalues of $B$. Let $R(i,\,j,\,k)$ be a non-negative integer defined by
$$R(i,\,j,\,k):=\left\{\begin{array}{ccl}\min \{p_i,\,r_j\}-k & : & \textup{$\lambda_i=\mu_j$ and $k<\min\{p_i,\,r_j\}$,}\\ 0 & : & \textup{$\lambda_i \neq \mu_j$ or $k \geq \min\{p_i,\,r_j\}$}.\end{array}\right.$$

\begin{proposition} \label{pp2}
With the above notation, the $k$-reflexivity defect of $\im \Delta$ can be expressed as
$$\rd_k(\im \Delta)=\sum_{i=1}^N\sum_{j=1}^M R(i,\,j,\,k).$$
In particular, $\im \Delta$ is a $k$-reflexive space if and only if all roots of the greatest common divisor of $m_A$ and $m_B$ of $A$ and $B$, respectively, are of multiplicity at most $k$.
\end{proposition}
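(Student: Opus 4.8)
The plan is to reduce the computation of $\rd_k(\im\Delta)$ to a direct sum of small, explicitly computable pieces by exploiting the Jordan structure of $A$ and $B$, and then to evaluate each piece. Recall from \eqref{transf} that conjugation by invertible transformations preserves the $k$-reflexivity defect, so I may replace $A$ and $B$ by their Jordan canonical forms. Writing $A=J_{p_1}(\lambda_1)\oplus\cdots\oplus J_{p_N}(\lambda_N)$ and $B=J_{r_1}(\mu_1)\oplus\cdots\oplus J_{r_M}(\mu_M)$ and decomposing a general $T\in\bM_n$ into blocks $T_{ij}\in\bM_{p_i,r_j}$ conformally with these two direct-sum decompositions, the generalized derivation acts block-diagonally: the $(i,j)$-block of $\Delta(T)$ depends only on $T_{ij}$ and equals $J_{p_i}(\lambda_i)T_{ij}-T_{ij}J_{r_j}(\mu_j)$. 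Hence $\im\Delta$ has exactly the block-decomposed form to which \eqref{ref} applies, and therefore
\begin{equation*}
\rd_k(\im\Delta)=\sum_{i=1}^N\sum_{j=1}^M\rd_k\bigl(\im\Delta_{ij}\bigr),
\end{equation*}
where $\Delta_{ij}(X)=J_{p_i}(\lambda_i)X-XJ_{r_j}(\mu_j)$ acts on $\bM_{p_i,r_j}$.

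This isolates the heart of the matter: computing $\rd_k$ of the image of a single ``elementary'' map $X\mapsto J_p(\lambda)X-XJ_r(\mu)$. First I would observe that this map is invertible precisely when $\lambda\neq\mu$ (its eigenvalues are the differences of the eigenvalues of the two Jordan blocks, all equal to $\lambda-\mu$), so in that case the image is all of $\bM_{p,r}$, which is $k$-reflexive and contributes $0$; this matches $R(i,j,k)=0$ when $\lambda_i\neq\mu_j$. When $\lambda=\mu$ one may translate both blocks by $-\lambda I$ without changing the map, reducing to the nilpotent case $X\mapsto J_p(0)X-XJ_r(0)$. For this nilpotent piece I would follow the strategy already illustrated in the Example: use Lemma~\ref{anih} to identify $(\im\Delta_{ij})_\perp$ as the kernel of the adjoint elementary operator, namely the space of matrices $C\in\bM_{r,p}$ solving $J_r(0)C-CJ_p(0)=0$, i.e. the intertwiners of the two nilpotent Jordan blocks, which form a well-understood space of (rectangular) upper-triangular Toeplitz-type matrices of dimension $\min\{p,r\}$. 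Then I would apply the duality formula \eqref{k-ref}, computing $\dim\bigl((\im\Delta_{ij})_\perp\cap\pF_k\bigr)$ and using $\rd_k=\dim((\im\Delta_{ij})_\perp)-\dim((\im\Delta_{ij})_\perp\cap\pF_k)$, equivalently $\dim(\Refk(\im\Delta_{ij}))-\dim(\im\Delta_{ij})$.

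The key rank computation is the following: the intertwiner space is spanned by ``shifted Toeplitz'' matrices of ranks $1,2,\ldots,\min\{p,r\}$, so that the generic (top-rank) element has rank exactly $\min\{p,r\}$ and the subspace consists of matrices whose rank is controlled by how many of these Toeplitz coefficients are nonzero. A careful but routine count then shows that the elements of this space lying in $\pF_k$ form a subspace of dimension $\min\{\min\{p,r\},k\}$, so that
\begin{equation*}
\rd_k(\im\Delta_{ij})=\min\{p,r\}-\min\{\min\{p,r\},k\}=\max\{\min\{p,r\}-k,\,0\},
\end{equation*}
which is exactly $R(i,j,k)$ in the case $\lambda_i=\mu_j$. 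Summing over all pairs recovers the stated formula. The main obstacle I anticipate is precisely this rank analysis of the intertwiner space: one must verify that intersecting the $\min\{p,r\}$-dimensional Toeplitz intertwiner space with $\pF_k$ yields a \emph{subspace} (not merely a set) of the claimed dimension, which requires understanding exactly which Toeplitz coefficients may be nonzero while keeping the rank at most $k$, and checking that this constraint is linear.

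For the final ``in particular'' assertion, I would simply collect the pairs contributing a positive term. By the formula, $\rd_k(\im\Delta)=0$ if and only if $\min\{p_i,r_j\}\le k$ for every pair $(i,j)$ with $\lambda_i=\mu_j$. A coincidence $\lambda_i=\mu_j$ corresponds to a common eigenvalue of $A$ and $B$, i.e. a root of $\gcd(m_A,m_B)$; the largest relevant $\min\{p_i,r_j\}$ over blocks sharing a given common eigenvalue is precisely the multiplicity of that eigenvalue as a root of $\gcd(m_A,m_B)$ (since the minimal polynomial records the largest Jordan block for each eigenvalue, and the gcd takes the smaller of the two exponents). Hence $\im\Delta$ is $k$-reflexive exactly when every root of $\gcd(m_A,m_B)$ has multiplicity at most $k$, as claimed.
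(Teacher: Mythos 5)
Your proposal is correct and follows essentially the same route as the paper's proof: the block reduction via \eqref{ref}, Lemma \ref{anih} to identify $(\im \Delta_{ij})_\perp$ with the Toeplitz intertwiner space $\ker \Delta_{r_j,p_i}$, and the duality \eqref{k-ref} giving $\rd_k(\im \Delta_{ij})=\dim\left((\im \Delta_{ij})_\perp\right)-\dim\left((\im \Delta_{ij})_\perp \cap \pF_k\right)=\max\{\min\{p,r\}-k,\,0\}$. The rank analysis you flag as the main obstacle goes through exactly as you anticipate (a Toeplitz element whose first nonzero coefficient sits on the $(m-1)$-st superdiagonal has rank $\min\{p,r\}-m+1$, so membership in $\pF_k$ is the linear condition that the first $\min\{p,r\}-k$ coefficients vanish), which is precisely the structural fact the paper asserts when it writes down $\ker \Delta_{r_j,p_i} \cap \pF_k$ explicitly as a $k$-dimensional subspace.
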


\begin{proof} %glej 5. zvezek, str. 74-77 ter program jedra-el-op.nb
Let $\mathbf{0}_{p,r}$ denote the $p \times r$ zero matrix ($p,\,r \in \bN$) and let $A$ and $B$ be as before the Proposition \ref{pp2}. For $1 \leq i \leq N$ and $1 \leq j \leq M$ define the following elementary operators on $\bM_{p_i,r_j}$ and $\bM_{r_j,p_i}$, respectively,
\begin{equation*}
\begin{split}
\Delta_{p_i,r_j}(T) &= J_{p_i}(\lambda_i)T-TJ_{r_j}(\mu_j) \qquad (T \in \bM_{p_i,r_j}), \\
\Delta_{r_j,p_i}(T) &= J_{r_j}(\mu_j)T-TJ_{p_i}(\lambda_i) \qquad (T \in \bM_{r_j,p_i}).
\end{split}
\end{equation*}
Lemma \ref{anih} yields $(\im \Delta_{p_i,r_j})_\perp=\ker \Delta_{r_j,p_i}$. If $\lambda_i \neq \mu_j$, then $\Delta_{p_i,r_j}$ is bijective and $\im \Delta_{p_i,r_j}$ is a $k$-reflexive space for every $k \in \bN$. Now assume that $\lambda_i=\mu_j$. It is not hard to see that
\begin{equation*}
\begin{split}
\ker \Delta_{r_j,p_i} &=\left\{\left(\begin{array}{cc}\mathbf{0}_{r_j,p_i-r_j} &  T\end{array}\right): T \in \mathfrak{T}_{r_j} \right\} \qquad \textrm{if $r_j \leq p_i$,}\\
\ker \Delta_{r_j,p_i} &=\left\{ \left(\begin{array}{c} T \\ \mathbf{0}_{r_j-p_i,p_i}\end{array}\right): T \in \mathfrak{T}_{p_i}\right\} \qquad \textrm{if $r_j > p_i$.}
\end{split}
\end{equation*}
Let us denote $d=\min\{p_i,r_j\}$ and $D=\max\{p_i,r_j\}$. Since transformations of the type \eqref{transf} preserve $k$-reflexivity defect we can without any loss of generality assume that $\ker \Delta_{r_j,p_i}=\left\{\big(\begin{array}{cc} \mathbf{0}_{d,D-d} & T \end{array}\big): T \in \mathfrak{T}_d \right\}$ and therefore $\dim (\im \Delta)=d(D-1)$. The structure of the space $\ker \Delta_{r_j,p_i}$ yields that $\ker \Delta_{r_j,p_i} \cap \pF_k$ is a linear space with the following property. If $k \geq d$, then $\ker \Delta_{r_j,p_i} \cap \pF_k=\ker \Delta_{r_j,p_i}$. Otherwise, if $1 \leq k <d$, then
$$\ker \Delta_{r_j,p_i} \cap \pF_k=\left\{\left(\begin{array}{cc} \mathbf{0}_{k,d-k}& T \\ \mathbf{0}_{D-k,d-k}& \mathbf{0}_{D-k,k} \end{array}\right): T \in \mathfrak{T}_k\right\}.$$
Therefore, $\im \Delta_{p_i,r_j}$ is a $k$-reflexive space iff $k \geq d$ or $\lambda_i \neq \mu_j$. Otherwise, if $k<d$ and $\lambda_i=\mu_j$, one gets $\dim(\Refk (\im \Delta_{p_i,r_j}))=dD-k$. The result in general setting now follows by \eqref{ref}.
\end{proof}

Let $A,\,B \in \mathbb{M}_n$ be as before the Proposition \ref{pp2}. Let $\Delta: \bM_n \rightarrow \bM_n$ be an elementary operator defined by $\Delta (T)=ATB-T$. Let $R(i,\,j,\,k)$ be a non-negative integer defined by
$$R(i,\,j,\,k):=\left\{\begin{array}{ccl}\min \{p_i,\,r_j\}-k & : & \textup{$\lambda_i,\,\mu_j \neq 0$, $\lambda_i=\frac{1}{\mu_j}$ and $k<\min\{p_i,\,r_j\}$,}\\ 0 & : & \textup{otherwise}.\end{array}\right.$$

\begin{corollary}
With the above notation, the $k$-reflexivity defect of $\im \Delta$ can be expressed as
$$\rd_k(\im \Delta)=\sum_{i=1}^N\sum_{j=1}^M R(i,\,j,\,k).$$
\end{corollary}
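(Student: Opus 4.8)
The plan is to mimic the proof of Proposition \ref{pp2}: first reduce to the case where $A$ and $B$ are in Jordan canonical form, then split into single Jordan blocks, and finally convert each block operator into a generalized derivation so that Proposition \ref{pp2} applies. Concretely, writing $A=UJ_AU^{-1}$ and $B=VJ_BV^{-1}$ with $J_A=J_{p_1}(\lambda_1)\oplus\cdots\oplus J_{p_N}(\lambda_N)$ and $J_B=J_{r_1}(\mu_1)\oplus\cdots\oplus J_{r_M}(\mu_M)$, one has $\Delta(T)=U\big(J_A(U^{-1}TV)J_B-U^{-1}TV\big)V^{-1}$, so $\im\Delta=U(\im\tilde\Delta)V^{-1}$ for $\tilde\Delta(S)=J_ASJ_B-S$. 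Since $S\mapsto USV^{-1}$ is of the type \eqref{transf}, it preserves the $k$-reflexivity defect, and thus $\rd_k(\im\Delta)=\rd_k(\im\tilde\Delta)$. Because $J_A,J_B$ are block diagonal, $\tilde\Delta$ acts entrywise on the block decomposition, the $(i,j)$ block being $\Delta_{ij}(T)=J_{p_i}(\lambda_i)TJ_{r_j}(\mu_j)-T$ on $\bM_{p_i,r_j}$; by \eqref{ref} it then suffices to prove $\rd_k(\im\Delta_{ij})=R(i,j,k)$ for every pair $(i,j)$.

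For a single block I would distinguish according to whether $\mu_j$ vanishes. If $\mu_j\neq 0$, then $J_{r_j}(\mu_j)$ is invertible and I would factor
\begin{equation*}
\Delta_{ij}(T)=J_{p_i}(\lambda_i)TJ_{r_j}(\mu_j)-T=\big(J_{p_i}(\lambda_i)T-TJ_{r_j}(\mu_j)^{-1}\big)J_{r_j}(\mu_j).
\end{equation*}
Right multiplication by the invertible matrix $J_{r_j}(\mu_j)$ is again of the type \eqref{transf}, so $\rd_k(\im\Delta_{ij})=\rd_k(\im\Delta')$, where $\Delta'(T)=J_{p_i}(\lambda_i)T-TJ_{r_j}(\mu_j)^{-1}$ is a genuine generalized derivation. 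Its first coefficient is the single Jordan block $J_{p_i}(\lambda_i)$, while $J_{r_j}(\mu_j)^{-1}$ is similar to the single Jordan block $J_{r_j}(1/\mu_j)$ (it has the unique eigenvalue $1/\mu_j$ with geometric multiplicity one). Applying Proposition \ref{pp2} to $\Delta'$, the eigenvalue-matching condition $\lambda_i=\mu_j$ of that proposition becomes $\lambda_i=1/\mu_j$, and one obtains $\rd_k(\im\Delta')=\min\{p_i,r_j\}-k$ when $\lambda_i=1/\mu_j$ and $k<\min\{p_i,r_j\}$, and $0$ otherwise — which is exactly $R(i,j,k)$ (note that $\lambda_i=1/\mu_j$ forces $\lambda_i\neq 0$).

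It remains to treat $\mu_j=0$, where the factorization breaks down but where $R(i,j,k)=0$ is claimed. Here I would show that $\Delta_{ij}$ is bijective, hence $\im\Delta_{ij}=\bM_{p_i,r_j}$ has vanishing defect. Indeed, $J_{r_j}(0)$ is nilpotent with $J_{r_j}(0)^{r_j}=0$, so any $T$ with $\Delta_{ij}(T)=0$ satisfies $T=J_{p_i}(\lambda_i)TJ_{r_j}(0)=J_{p_i}(\lambda_i)^{r_j}TJ_{r_j}(0)^{r_j}=0$ upon iterating; thus $\Delta_{ij}$ is injective and therefore bijective. (The symmetric iteration handles $\lambda_i=0$ directly as well.) Assembling the two cases gives $\rd_k(\im\Delta_{ij})=R(i,j,k)$, and summing over $(i,j)$ via \eqref{ref} yields the formula.

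The main obstacle I anticipate is bookkeeping rather than conceptual: making sure the factorization correctly converts the product $\lambda_i\mu_j$ appearing naturally in the two-sided operator into the matching condition $\lambda_i=1/\mu_j$, and carefully isolating the degenerate block where $\mu_j=0$ so that it contributes nothing. Verifying that $J_{r_j}(\mu_j)^{-1}$ is again a single Jordan block (with eigenvalue $1/\mu_j$) is the one lemma-like step worth stating explicitly before invoking Proposition \ref{pp2}.
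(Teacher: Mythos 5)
Your proof is correct and takes essentially the same route as the paper: reduce to single Jordan blocks via \eqref{ref}, convert each block operator into a generalized derivation by factoring out the invertible factor $J_{r_j}(\mu_j)$, and invoke Proposition \ref{pp2} together with the fact that $J_{r_j}(\mu_j)^{-1}$ is similar to the single block $J_{r_j}(1/\mu_j)$ (the paper cites \cite[Example 6.2.13]{HJ1} for exactly this step). The only difference is organizational: for $\mu_j=0$ you prove bijectivity uniformly by iterating $T=J_{p_i}(\lambda_i)^m T J_{r_j}(0)^m$, whereas the paper computes $\Delta_{p_i,r_j}(T)$ explicitly when $\lambda_i=\mu_j=0$ and handles $\lambda_i\neq 0$, $\mu_j=0$ by the symmetric factorization --- both arguments yield defect $0$ for those blocks.
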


\begin{proof}
Define $\Delta_{p_i,r_j}(T)=J_{p_i}(\lambda_i)TJ_{r_j}(\mu_j)-T$ for $T \in \bM_{p_i,r_j}$. By \eqref{ref} we get $\rd_k(\im \Delta)=\sum_{i=1}^N\sum_{j=1}^M \rd_k(\im \Delta_{p_i,r_j})$, hence it suffices to determine $\rd_k(\im \Delta_{p_i,r_j})$. If $\lambda_i=\mu_j=0$, then it is not hard to see that for $T=(t_{uv}) \in \bM_{p_i,r_j}$ we have
$$\Delta_{p_i,r_j}(T)=-T+\left(\begin{array}{cccc}0&t_{21}&\ldots&t_{2,r_j-1}\\\vdots&\vdots&&\vdots\\0&t_{p_i,1}&\ldots&t_{p_i,r_j-1}\\0&0&\ldots&0 \end{array}\right),$$
therefore $\im \Delta_{p_i,r_j}=\bM_{p_i,r_j}$ and $\rd_k(\im \Delta_{p_i,r_j})=0$ for every positive integer $k$. If $\lambda_i=0$ and $\mu_j \neq 0$, then $\im \Delta_{p_i,r_j}=\{XJ_{r_j}(\mu_j):\, X \in \im \tilde{\Delta}_{p_i,r_j}\}$ where $\tilde{\Delta}_{p_i,r_j}: \bM_{p_i,r_j} \rightarrow \bM_{p_i,r_j}$ is a generalized derivation of the form $\tilde{\Delta}_{p_i,r_j}(T)=J_{p_i}(0)T-TJ_{r_j}(\mu_j)^{-1}$. Thus we have $\rd_k(\im \Delta_{p_i,r_j})=\rd_k(\im \tilde{\Delta}_{p_i,r_j})$. By \cite[Example 6.2.13]{HJ1} one can easily see that inverting matrices preserves the sizes of Jordan blocks, hence Proposition \ref{pp2} yields $\rd_k(\im \Delta_{p_i,r_j})=0$. Similarly, if $\lambda_i \neq 0$ and $\mu_j=0$ or if $\lambda_i \neq 0$, $\mu_j \neq 0$ and $\lambda_i \neq \frac{1}{\mu_j}$, then again Proposition \ref{pp2} yields $\rd_k(\im \Delta_{p_i,r_j})=0$. Now assume that $\lambda_i,\,\mu_j \neq 0$ and that $\lambda_i = \frac{1}{\mu_j}$. As before, $\rd_k(\im \Delta_{p_i,r_j})=\rd_k(\im \tilde{\Delta}_{p_i,r_j})$ where $\tilde{\Delta}_{p_i,r_j}: \bM_{p_i,r_j} \rightarrow \bM_{p_i,r_j}$ is a generalized derivation of the form $\tilde{\Delta}_{p_i,r_j}(T)=J_{p_i}(\lambda_i)T-TJ_{r_j}(\mu_j)^{-1}$. Now the Proposition \ref{pp2} yields that $\rd_k(\im \tilde{\Delta}_{p_i,r_j})=0$ if $k \geq \min\{p_i,r_j\}$ and $\rd_k(\im \tilde{\Delta}_{p_i,r_j})=\min \{p_i,\,r_j\}-k$ if $k < \min\{p_i,r_j\}$. By \eqref{ref} one gets $\rd_k(\im \Delta)=\sum_{i=1}^N\sum_{j=1}^M R(i,\,j,\,k)$.
\end{proof}

%************************************************************************
%************************************************************************
\section*{Acknowledgements}
%************************************************************************
%************************************************************************
The author is thankful to the Slovenian Research Agency for their financial support.

%%%%%%%%%%%%%%%%%%%%%%%%%%%%%%%%%%%%%%%%%%%%%%%%%%%%%%%%%%%%%%%%%%%%%%%%%%%%%%%%%%%%%%%%%

\end{document}